\newtheorem{theorem}{Theorem}[section]
\newtheorem{lemma}[theorem]{Lemma}
\newtheorem{proposition}[theorem]{Proposition}
\newtheorem{exa}[theorem]{Example}
\newtheorem{exas}[theorem]{Examples}
\newtheorem{defini}[theorem]{Definition}
\newenvironment{definition}{\begin{defini} \em}{\end{defini}}
\newtheorem{rema}[theorem]{Remark}
\newtheorem{remas}[theorem]{Remarks}
\newenvironment{proof}{{\noindent \sc Proof: } }{\mbox{ }\hfill$\Box$  
                        \vspace{1.5ex} \par}
\def\hfleche#1#2{\smash{\mathop{\vbox{\hbox to 8.5mm{{#1}}}}\limits^{#2}}}
\begin{document}

\title {A note on 3-manifolds and  complex surface singularities\footnote{
Research partially supported by  CONACYT and PAPIIT-UNAM from M\'exico.  Accepted for publication in Math. Z. \newline $\quad$ 
{\it
Key-words:} Todd genus,  Milnor numbers, Gorenstein singularities, smoothings, cobordism, geometric structures, frames. 
 \newline   {\it Mathematics Subject Classification.} Primary   32S05,  32S10,  32S25, 32S55,  57M27;
  Secondary 14Bxx, 55S35, 57R20, 57R57, 57R77.  }
  }

%\endtitle
\author{ Jos\'e Seade
   }

\date{}
%%\endauthor
%%\vskip.3cm
%\date {January 19, 2006 }
%\affil
%\linebreak E-mail:  \endaffil

%\date{10 November 2004}

\def\Z{{\Bbb Z}}
\def\R{{\Bbb R}}
\def\S{{\Bbb S}}

\def\D{{\Bbb D}}
\def\C{{\Bbb C}}
\def\Q{{\Bbb Q}}

\def\B{{\mathbb B}}
 \def\RP{\mathbb{R P}}
\def\F{\mathcal F}
\def\L{\mathcal L}
\def\a{\alpha}
\def\0{\underline 0}
\def\b{\beta }
\def\g{\gamma}
\def\ro{\varrho}
\def\na{\nabla}
\def\o{\omega}
\def\e{\varepsilon}
\def\d{\delta}
\def\SU{{\rm SU}(2)}
\def\O{\mathcal O}
\def\K{\mathcal K}
\def\t{\theta}
\def\vt{\vartheta}
\def\nn {\vskip 0.3cm \noindent }
\def\n {\noindent}
\def\nn {\vskip.2cm \noindent}
\def\l{\ell}
\def\la{\lambda}
\def\v {\vskip.1cm}
\def\vv {\vskip.2cm}
\def\grad{\overline\nabla_X f}
\def\s{\mathbb{S}}

\setcounter{section}{-0}

%%\NoRunningHeads

\pagestyle{headings}

%\pagenumbering{roman}

\maketitle
\bibliographystyle{plain}

\begin{abstract}
This article is motivated by the original Casson invariant regarded as an integral lifting of the Rochlin invariant. We aim to defining an integral lifting of the Adams e-invariant of stably framed 3-manifolds, perhaps endowed with some additional  structure. We succeed in doing so for manifolds which are links of  normal complex Gorenstein smoothable singularities. These manifolds are naturally equipped with a canonical $\SU$-frame.  To start we notice that 
the set   of homotopy classes of $\SU$-frames on the stable tangent bundle of every closed oriented  3-manifold  is canonically a $\mathbb Z$-torsor. Then we define the $\widehat E$-invariant for the manifolds in question, an integer that modulo 24 is the Adams e-invariant. The $\widehat E$-invariant for the canonical frame  equals the Milnor number plus 1, so  this brings a new viewpoint on the Milnor number of the smoothable Gorenstein surface singularities. 
\end{abstract}

\section*{Introduction}
It is well-known that every closed oriented 3-manifold $M$ is parallelizable (see for instance \cite[p. 46]{Ki}), and each choice of a trivialization of its tangent  bundle determines specific Spin and ${\rm Spin}^c$ structures on $M$.
If we equip $M$  with a Spin structure $\mathcal S$,  one has its classical Rochlin invariant, which is defined as
$$\qquad {\mathcal R} (M, \mathcal S):=   \; \sigma(X)   \qquad \hbox{mod} \;  16 \;,$$
where $\sigma(X)$ is the signature of a compact Spin manifold $X$ which has $M$ as its Spin-boundary. The class of  this number     modulo 16 does not depend on the choice of the manifold $X$ due to Rochlin's signature theorem, stating that the signature of every closed Spin 4-manifold is a multiple of 16. 
This invariant has played for decades an important role in 3-manifolds theory. In the late 1980s, Andrew  Casson introduced an integer-valued invariant $\lambda(M)$ for homology spheres, that in some sense counts half the number of conjugacy classes of  irreducible $\SU$-representations of the fundamental group of $M$. An integral homology sphere has a unique Spin structure, up to isomorphism, and the Casson invariant  provides a lifting to $\Z$ of the Rochlin invariant. The literature on this topic is vast and we refer to \cite {AM} for a thorough account.  The celebrated Casson invariant conjecture for links of hpersurface singularities in $\C^3$ states that in this setting the invariant  essentially equals the signature of the corresponding Milnor fiber (see  \cite{Ne-Wa}).

We know from work by Hirzebruch and  Atiyah-Singer, that there are deep similarities between the signature and another important invariant, the Todd genus. If we now equip the 3-manifold $M$ with a trivialization $\mathcal F$ of its (stable) tangent bundle, one has its complex Adams e-invariant (see \cite{CF}):
$$ \qquad  \qquad  e_c(M, {\mathcal F}) := {\rm Td}(X, {\mathcal F})[X]  \qquad  \hbox{mod} \; \Z \;,
$$
where $X$ is now a compact weakly complex manifold with boundary $M$; ${\rm Td}(X, {\mathcal F})[X]$ denotes the 2nd Todd polynomial (see \cite{Hir} or Section \ref{Section-preliminaries} below) evaluated in the Chern classes $c_i(X, {\mathcal F})$ of $X$ relative to the frame ${\mathcal F}$ on the boundary; $[X]  \in H^4(X,M)$ is the orientation cycle.  Thus we may equivalently define:
$$ e_c(X, {\mathcal F}) = \big(c_1(X, {\mathcal F})^2 + c_2(X, {\mathcal F}) \big)[X]  \quad \hbox{mod} \; 12 \,,
$$
so one has an invariant of the framed manifold $(X, {\mathcal F})$ in $\Z_{12}$. Bearing in mind the similarities between the signature and the Todd genus, and that for oriented integral homology spheres,  the Casson invariant is an integral lifting of the Rochlin invariant, it is natural to ask whether there  exists an integral lifting of the complex Adams $e_c$-invariant of  framed 3-manifolds, perhaps equipped with some additional structure? 

Here we  answer positively this question for  links of  normal Gorenstein smoothable surface singularities. This leads to an invariant   of surface singularities that we denote by 
$\widehat E(V,\0) \in \Z$. We prove:

\vv

\vv
{\bf Theorem 1} {\it  Let $L_V$ be the link of a normal surface singularity germ $(V,\0)$. Set $V^* := V \setminus \{\0\}$ and $ \tau L_V := TV^* |_{L_V}$ its stable tangent bundle. Then:

\begin{enumerate}
\item The set $\{{\mathcal Fr}_{\SU}(L_V)\}$ of homotopy classes of $\SU$-frames on  $\tau L_V$ is canonically a $\mathbb Z$-torsor and so it corresponds bijectively  with the integers. 

\item When  the germ $(V,\0)$ is Gorenstein, one has a canonical element $[L_V,\rho]$ in this set, depending only on the analytic type of the germ $(V,\0)$. 

\item If the germ $(V,\0)$ further is smoothable, then the bijection $\{{\mathcal Fr}_{\SU}(L_V)\} \leftrightarrow \Z$ becomes canonical and we get a canonical invariant $\widehat E(L_V,\rho) \in \Z$.

\item In this setting, with $(V,\0)$ Gorenstein and smoothable, we have 
$$ \widehat E(L_V,\rho) \, = \, 12 \,{\rm Td}(F_t, {\mathcal F})[X] = \mu_{GS} + 1 \;,$$
where $F_t$ is the Milnor fibre of a (any) smoothing and $\mu_{GS}$ is the Milnor number of the singularity, introduced by Greuel and Steenbrink, which equals the 2nd Betti number of the Milnor fibre.
\end{enumerate}
}

\vv
\vv
The first statement above actually holds for all closed oriented 3-manifolds (Lemma \ref {SU-frames}). Theorem 1 yields to a definition of the invariant 
$ \widehat E(L_V,\mathcal F) $ for all $\SU$-frames on the link of Gorenstein smoothable singularities, thus providing an integral lift of the complex Adams e-invariant.
There is  a natural projection map $\{{\mathcal Fr}_{\SU}(L_V)\} \longrightarrow \Omega_3^{fr}$ onto the stably framed cobordism group in dimension 3.  There is too a  real Adams e-invariant  $\Omega_3^{fr} \buildrel {e_R} \over {\longrightarrow} \Z_{24}$ which  is an ismorphism. One has 
$e_c = \frac{1}{2} e_R$ mod $\Z$, so  ${e_R}$ is slightly finer that ${e_c}$ (see \cite {Se4}), and  we actually get the commutative diagram below: 
 
\[
\begin{CD}
\Z \cong \{{\mathcal Fr}_{\SU}(L_V)\}   \buildrel {\widehat E} \over {\longrightarrow} \; \Z \\
\qquad  \quad \qquad \downarrow  \qquad  \quad \qquad    \downarrow \\
\qquad \qquad  \qquad \quad \quad   \quad  \Omega_3^{fr}  \quad \buildrel {e_R} \over {\longrightarrow} \quad \Z_{24} \qquad \quad \\ 
\qquad  \qquad \qquad  \quad \cong \; \downarrow  \qquad  \quad \qquad    \downarrow  \, \hbox{2-to-1}\\
\qquad \qquad  \qquad \quad \quad \quad  \Omega_3^{fr}  \quad \buildrel {e_c} \over {\longrightarrow} \quad \Z_{12} \qquad \quad
\end{CD}
\]

It would be interesting to extend these ideas and results to all framed 3-manifolds, perhaps equipped with some additional structure. In the case we envisage here one has canonical contact and ${\rm Spin}^c$ structures on the link, compatible with the frame  (see for instance \cite{CNP, NeSzi, NeSigu}); these might be clues for a generalization. 

The class of singularities to which Theorem 1 above applies  includes  all hypersurface and  ICIS germs. The  3-manifolds which arise as links of surface singularities are the oriented graph-manifolds with negative definite intersection matrix (see  \cite {Ne1} for a thorough account on the subject,  and \cite[Chapter 4]{Seade-librosing} for an  introduction). 
One has that given an arbitrary 
 finite graph $\Gamma$, say  with  $r$ vertices, for almost all sets of negative weights $w= (w_1,...,w_r)$ for the vertices the corresponding intersection matrix is negative definite; and if the graph has no loops, then for each such vector of weights, there are infinitely many vectors of genera $g= (g_1,...,g_r)$, $g_i \ge 0$, such that the  weighted graph $(\Gamma, w, g)$ corresponds to some Gorenstein singularity (although not every  such singularity is smoothable). This follows easily from \cite{LS, PS, PPP}.

%%%%%%%%
%%%%%%%%

\section{Preliminaries}\label{Section-preliminaries}
The material in this section is all classical and essentially comes from \cite{KM, Hir, CF}; we include it here for completeness. By the {\it stable tangent bundle} $\tau M$ of an oriented smooth $n$-manifold $M$ we mean the direct sum $\tau M:= (1) \oplus TM$ of a trivial line bundle and its tangent bundle. 

If $\tau M$ is trivial, hence isomorphic to the bundle $M \times \R^{n+1}$, 
by a (stable) {\it frame} on $M$ we mean a specific choice of a trivialization of $\tau M$. Hence we may regard a frame on $M$ as a set  $\alpha = (\alpha_1,\cdots,\alpha_{n+1})$ of $n+1$ linearly independent sections of $\tau M$. 

If $A: M \to {\rm GL}(n+1,\R)$ is a smooth function and $\alpha$ is as above, we can twist this frame using $A$ in the obvious way: At each $x \in M$  the basis $(\alpha_1(x),\cdots,\alpha_{n+1}(x))$  of the vector space $\tau_xM$ is carried by the matrix $A(x)$ into a new basis of $\tau_xM$. Conversely, given two stable frames $\alpha = (\alpha_1,\cdots,\alpha_{n+1})$ and $\beta = (\beta_1,\cdots,\beta_{n+1})$ on $M$, at each point $x$ we have two possibly different bases of $\tau_xM$, and we can pass from one to the other by a linear isomorphism. So the two frames differ by a smooth function $d(\alpha,\beta): M \to {\rm GL}(n+1,\R)$, and the homotopy classes of frames on $M$ are classified by $[M,{\rm GL}(n+1,\R)]$, the homotopy classes of maps from $M$ into the General Linear group in dimension $n+1$.

If $n$ is odd, say $n=2k- 1$, and $\tau M$ is equipped with the  structure of a complex vector bundle, we may speak of {\it complex frames}, which means a set of $k$ sections of $\tau M$ which are linearly independent over $\C$. In this case the
complex frames on $M$ are classified by the set $[M,{\rm GL}(k,\C)]$.

Notice that similar considerations apply to vector bundles in general. In the sequel we will actually consider a non-singular complex analytic surface $V^*$, its canonical bundle $\mathcal K_{V^*}$, which is a holomorphic line bundle, and nowhere-vanishing holomorphic sections of it, {\it i.e.}, holomorphic 2-forms on $V^*$ which do not vanish at any point. Every such 2-form can be regarded as a holomorphic 1-frame  on $\mathcal K_{V^*}$. In this case the difference between two such 2-forms is just the quotient of the two sections, which is a never-vanishing holomorphic function with values in $\C^*$, the non-zero complex numbers.

Now recall \cite{Hir}. The {\it Todd sequence of polynomials} is the multiplicative sequence $\{T_k(c_1,\cdots,c_k)\}$ with
characteristic power series:
$$Q(x) = \frac{x}{1 - e^{-x}} =  1 + \frac{1}{2} x + \sum_{k=1}^\infty (-1)^{k-1} \frac {{\mathcal B}_k}{(2k)!} x^{2k} \;,
$$
where $\{{\mathcal B}_k\}$  are the Bernoulli numbers. The first polynomials in this sequence are:
$$ T_1 = \frac {1}{2} c_1 \quad ; \;  T_2 = \frac {1}{12} (c_2 + c_1^2) \quad ; \;  T_3 = \frac {1}{24} (c_2  c_1)  \quad ; \; T_4 = \frac {1}{720} (-c_4 + c_3 c_1 + 3 c_2^2 + 4c_2 c_1^2 -c_1^4)  \quad ; \; \cdots 
$$
If $X$ is now an almost complex closed manifold of real dimension $2k$, then its {\it Todd genus} is defined as:
$${\rm Td}[X] = T_k(c_1(X),\cdots,c_k(X))[X]\,,$$
where the $c_i(X)$ are the Chern classes and $[X]$ is the orientation cycle. This is in principle a rational number, but the Hirzebruch-Riemann-Roch theorem  in \cite{Hir} says that for complex manifolds it equals the analytic Euler characteristic, so it is an integer. This statement was generalized by the Atiyah-Singer index theorem to arbitrary almost complex manifolds using the $\bar \partial$-operator.

Now suppose $X$ is an almost complex compact manifold of real dimension $2k$ with non-empty boundary $M$. Then the restriction of the tangent bundle $TX$ to $M$ can be regarded as the stable tangent bundle $\tau M = TX|_M = (1) \oplus TM\,$, where $(1)$ is the normal bundle of $M$ in $X$, which is a trivial line bundle.  If we have a complex frame $\alpha = (\alpha_1,\cdots,\alpha_{k})$ on $M$, this defines representatives of the Chern classes of $X$ that vanish over $M$. These are by definition the {\it Chern classes of $X$ relative to the frame  on the boundary}, $c_i(X;\alpha)$, $i=1,\cdots,k$; these live in $H^*(X,M;\mathbb Z)$. In this setting one has  the corresponding Todd genus of $X$ relative to the frame  $\alpha$, see \cite{CF}:
$${\rm Td}[X;\alpha] = T_k(c_1(X;\alpha),\cdots,c_n(X;\alpha))[X,M]\,.$$
It is clear from the definition of the relative Chern classes that these map to the usual Chern classes in $H^*(X)$ under the morphism induced by the inclusion $(X,\emptyset) \to (X,M)$. It follows, using basic properties of the cup product, that if we have two different complex frames on the boundary of $X$, then all the decomposable relative Chern numbers coincide and the difference in their corresponding relative Todd genus is determined only by $c_n$, the class of top degree. In the sequel we consider only the case $k=2$.

\section{Proof of the theorem}

Let $M$ be a closed oriented 3-manifold and $TM$  its tangent bundle,  isomorphic to $M \times \R^3$ since every such manifold has trivial tangent bundle.
Let $\tau M := (1)  \oplus TM $ be its stable tangent bundle, where $(1)$ is a trivial real line bundle over $M$.  Then $\tau M$ can be identified with $M \times \C^2$. By an $\SU$-frame  on $M$ we mean  a basis $\beta = (\beta_1,\beta_2)$  for $\tau M$ so that at each $x \in M$ the  two vectors $\beta_i(x)$ define a matrix in $\SU$. Notice that any two $\SU$-frames on $M$  differ  by a map from $M$  into the  Lie group $\SU$, which is isomorphic to the 3-sphere $\s^3$. The homotopy classes of maps from $M$ into  $\s^3$, $[M,\s^3]$,   are classified by their degree.
Hence, letting $\{{\mathcal Fr}_{\SU}(M)\}$ denote the set of homotopy classes of $\SU$-frames on $M$, we get:

\begin{lemma}\label{SU-frames}
There is a bijection between the sets $\{{\mathcal Fr}_{\SU}(M)\}$ and $[M,\s^3] \cong \Z$. In fact $\{{\mathcal Fr}_{\SU}(M)\}$ is 
canonically a $\mathbb Z$-torsor.
\end{lemma}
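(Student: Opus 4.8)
The plan is to establish the bijection first, then upgrade it to a $\mathbb{Z}$-torsor structure by pinning down what "canonical" should mean here. For the bijection: fix any $\SU$-frame $\beta^0$ on $M$ as a reference point — one exists because $\tau M \cong M \times \mathbb{C}^2$ and we may Gram–Schmidt any trivialization to land in $\SU$ fiberwise. Then any other $\SU$-frame $\beta$ determines the difference map $d(\beta^0,\beta)\colon M \to \SU \cong \s^3$, and homotopic frames give homotopic difference maps and conversely, so $\{{\mathcal Fr}_{\SU}(M)\} \to [M,\s^3]$, $[\beta] \mapsto [d(\beta^0,\beta)]$, is a well-defined bijection. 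That $[M,\s^3] \cong \Z$ for $M$ a closed oriented $3$-manifold is the standard fact that maps to $\s^3$ from a closed oriented $3$-complex are classified by degree (this is where orientability and dimension $3$ enter: $\s^3$ is $2$-connected, so by obstruction theory $[M,\s^3] = H^3(M;\pi_3\s^3) = H^3(M;\Z) \cong \Z$, the last isomorphism using the chosen orientation).

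Next I would make the $\mathbb{Z}$-torsor structure explicit and check it does not depend on the auxiliary choice of $\beta^0$. The point is that $\SU \cong \s^3$ is a topological group, so $[M,\s^3]$ is an abelian group under pointwise multiplication, and this group structure is the \emph{degree} — that is, the pointwise-product group structure on $[M,\s^3]$ agrees with the one transported from $H^3(M;\Z)\cong\Z$ (the group structure on $\pi_3(\s^3)=\Z$ is the usual one, and the obstruction-theoretic identification is a group isomorphism). Now $[M,\s^3]$ acts on $\{{\mathcal Fr}_{\SU}(M)\}$ by twisting: $[A] \cdot [\beta] := [A \cdot \beta]$, where $(A\cdot\beta)(x) = A(x)\beta(x)$ with $A\colon M \to \SU$. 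This action is free and transitive — freeness and transitivity both follow from the fact that two frames differ by a \emph{unique} homotopy class of difference map. Hence $\{{\mathcal Fr}_{\SU}(M)\}$ is a torsor over the abelian group $[M,\s^3]\cong\Z$. The word "canonically" is justified because this torsor structure — the action of $[M,\s^3]$ — makes no reference to $\beta^0$; only the identification of the underlying \emph{set} with $\Z$ (as opposed to the torsor structure) required a basepoint, and the orientation of $M$ canonically identifies the acting group $[M,\s^3]$ with $\Z$ itself.

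The one subtlety worth flagging, and the step I expect to be the main obstacle to a clean write-up, is the compatibility of the two group structures on $[M,\s^3]$: the "multiplicative" one coming from $\SU$ being a Lie group and the "additive/degree" one coming from the obstruction-theoretic identification with $H^3(M;\Z)$. One must check these coincide, or at least that the multiplicative one is free abelian of rank one with a canonical generator dual to the orientation. The cleanest argument is that for a space $Y$ which is $(n-1)$-connected with $\pi_n Y$ abelian and $\dim X \le n$, the bijection $[X,Y] \cong H^n(X;\pi_n Y)$ from the primary obstruction is a group homomorphism when $Y$ is an $H$-space with multiplication inducing the given addition on $\pi_n Y$; here $Y=\s^3$, $n=3$, and the group multiplication on $\s^3 = \SU$ induces the standard generator of $\pi_3(\s^3)$. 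Everything else — existence of a reference frame, well-definedness of difference classes under homotopy, freeness and transitivity of the twisting action — is routine and can be dispatched quickly. I would close by remarking that none of this used anything beyond $M$ being a closed oriented $3$-manifold, which is exactly the claim that the first part of Theorem 1 holds in that generality.
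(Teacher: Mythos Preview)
Your proposal is correct and follows essentially the same approach as the paper: two $\SU$-frames differ by a map $M\to\SU\cong\s^3$, and such maps are classified by degree, so $\{\mathcal{Fr}_{\SU}(M)\}$ is a $\Z$-torsor. The paper dispatches this in one sentence before stating the lemma, whereas you spell out the reference-frame bijection, the obstruction-theoretic identification $[M,\s^3]\cong H^3(M;\Z)$, and the compatibility of the $H$-space and degree group structures; this extra care is welcome but not a different argument.
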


Recall that a $\mathbb Z$-torsor is a principal homogeneous space for the group $\mathbb Z$ of the integers, {\it i.e.},  a non-empty set on which $\mathbb Z$  acts freely and transitively.
Notice that the bijection $\{{\mathcal Fr}_{\SU}(M)\} \cong \Z$
is not canonical in general.
Now let $(V,\0)$ be a normal complex surface singularity in some $\C^N$, so $V$ is a complex 2-dimensional variety,   non-singular away from $\0$ and every $\C$-valued holomorphic function defined on a punctured neighbourhood of $\0$ in $V$ extends to $\0$.
We know from  \cite{Mi1} that for  $\e > 0$ sufficiently small, every sphere $\s_r$  in $\C^N$ centered at $\0$ and of radius $\le \e$ meets $V$ transversally and the pair $(\B_\e, V \cap \B_\e)$ is homeomorphic to the cone over $(\S_\e, V \cap \s_\e)$, where $
\B_\e$ is the ball bounded by the sphere $\s_\e$.  Such an $\s_\e$ is called a Milnor sphere for $V$. The 3-manifold $L_V := V \cap \s_\e$ is called the link of the singularity and its oriented diffeomorphism type is independent of the choice of $\e$ provided this is small enough.

  Let us  equip $V^*:= V \setminus \{\0\}$
 with the induced Hermitian metric from $\C^N$, so the structure group of its tangent bundle $TV^*$ can be assumed to be ${\rm U}(2)$. Let the germ $(V,\0)$  be Gorenstein and
 let $\omega$ be a nowhere vanishing holomorphic 2-form on $V^*$. We know from \cite{Se1} that $\omega$ defines a reduction to $\SU$ of the structure group of the  bundle $TV^*$. Since $\SU \cong \s^3$ we may think of it as being the symplectic group  $Sp(1)$ of unit quaternions. 
  The $\SU$-structure on $V^*$ defines multiplication by the quaternions $i, j, k$ at the tangent space of each point in $V^*$. Now let $\nu$ denote the unit outwards normal field of $M$ in $V^*$. Multiplying this normal vector field by the quaternions $i, j, k$ at each $x \in M$ we get an $\SU$-frame  on $M$. This is the {\it canonical framing} $\rho$ from \cite{Se1}. 
  We have:
  
  \begin{lemma}
   The  frame  $\rho$  is independent of the choice of holomorphic 2-form up to homotopy.
  \end{lemma}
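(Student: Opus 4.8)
The plan is to reduce the statement to the fact, already recorded in the excerpt, that the difference between two nowhere-vanishing holomorphic $2$-forms $\omega, \omega'$ on $V^*$ is a nowhere-vanishing holomorphic function $f = \omega'/\omega : V^* \to \C^*$, and that two $\SU$-frames on $M = L_V$ are homotopic exactly when the $\SU \cong \s^3$-valued map relating them has degree zero (Lemma \ref{SU-frames}). So the whole content is to show that the change of canonical framing induced by passing from $\omega$ to $\omega'$ is classified, in $[M,\s^3]\cong\Z$, by something that visibly vanishes, and the natural candidate is the winding/degree of $f|_M : M \to \C^*$ — which is automatically zero because $H^1(V^*;\Z)$ has no relevant class, or more concretely because $f$ extends as a nonvanishing holomorphic function over the Stein filling $V \cap \B_\e$ (the cone), forcing $f|_M$ to be null-homotopic as a map to $\C^*$.

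Concretely I would proceed as follows. First, recall how $\rho$ is built from $\omega$: the form $\omega$ reduces the structure group of $TV^*$ from ${\rm U}(2)$ to $\SU$ (this is the cited result of \cite{Se1}), equivalently it trivializes the canonical bundle $\mathcal K_{V^*} = \Lambda^2 T^*V^*$; the resulting $\SU$-structure equips each tangent space with an action of the quaternions $i,j,k$, and $\rho = (i\nu, j\nu, k\nu)$ where $\nu$ is the outward unit normal of $M$ in $V^*$. Second, given a second form $\omega' = f\omega$ with $f : V^* \to \C^*$ holomorphic, the two $\SU$-structures differ by the ${\rm U}(1)$-valued gauge transformation $g = f/|f| : V^* \to \S^1 \subset \C^*$ acting on the ${\rm U}(2)$-frame bundle (scaling the determinant by $f$ and renormalizing). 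Hence the two canonical frames $\rho, \rho'$ on $M$ differ by the restriction to $M$ of this $\S^1$-valued map, viewed inside $\SU \cong \s^3$ via the standard circle subgroup. Third, under the identification $[M,\s^3]\cong\Z$ of Lemma \ref{SU-frames}, a map $M \to \S^1 \hookrightarrow \s^3$ is null-homotopic in $\s^3$ as soon as it is null-homotopic in $\S^1$ — and indeed any map factoring through $\S^1$ and then included into the simply-connected $\s^3$ is null-homotopic, since $\pi_1(\s^3)=0$ and $\S^1 \hookrightarrow \s^3$ kills $\pi_1$. Therefore $\rho' \simeq \rho$.

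The one point that deserves care — and which I expect to be the main obstacle to writing cleanly rather than a deep difficulty — is verifying that the change of frame really is given by an $\S^1$-valued (equivalently $\C^*$-valued) map and nothing more, i.e.\ that multiplying $\omega$ by a nonvanishing function only rotates the $\SU$-structure by the corresponding ${\rm U}(1)$-factor and does not alter the underlying ${\rm SO}(4)$-reduction in some subtler way. This is exactly the infinitesimal statement that $\mathrm{U}(2)/\mathrm{SU}(2) \cong \mathrm{U}(1)$ and that the space of $\SU$-reductions compatible with a fixed Hermitian metric is a torsor over maps to $\mathrm{U}(1)$, the torsor action being precisely by rescaling the trivialization of $\mathcal K_{V^*}$; I would cite \cite{Se1} for the construction and then just observe that the rescaling by $f$ moves $\rho$ to $\rho'$ through this $\mathrm{U}(1)$-action. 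Once that is in place, the homotopy triviality is immediate from $\pi_1(\s^3)=0$, and one does not even need the Steinness of the filling — although it gives an alternative argument, since $f|_M$ extends over $V\cap\B_\e$ and hence $g=f/|f|$ is null-homotopic already as a map $M \to \S^1$. Either route closes the proof.
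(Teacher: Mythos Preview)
Your ``alternative'' route at the end --- extend $f=\omega'/\omega$ across the singular point and use that the cone $V\cap\B_\e$ is contractible, so $f|_M:M\to\C^*$ is null-homotopic --- is exactly the paper's argument and is the one that works. Note, however, that the extension step genuinely uses the \emph{normality} of $(V,\0)$: this is what lets a holomorphic $f:V^*\to\C$ extend to $V$, and then the observation that a holomorphic function on a normal surface cannot vanish at an isolated point forces $f(\0)\neq 0$. Your throwaway remark that ``$H^1(V^*;\Z)$ has no relevant class'' is false in general ($V^*$ deformation retracts onto the link $L_V$, which can have arbitrary first cohomology), so the extension argument is not optional.

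Your \emph{main} route, by contrast, has a real gap. You assert that the difference $d(\rho,\rho')$ factors through the ``standard circle subgroup'' $\S^1\subset\SU\cong\s^3$, and then conclude by $\pi_1(\s^3)=0$. But $\rho$ and $\rho'$ are $\SU$-frames for \emph{different} $\SU$-reductions (those determined by $\omega$ and $\omega'$), so when you compare them as unitary frames of the same Hermitian bundle the difference is only $\mathrm{U}(2)$-valued, not $\SU$-valued. Concretely, if $j$ and $j'$ are the two quaternionic structures then $j'=e^{-i\theta}j$ with $e^{i\theta}=f/|f|$, so $\rho=(\nu,j\nu)$ and $\rho'=(\nu,e^{-i\theta}j\nu)$ differ by $\mathrm{diag}(1,e^{-i\theta})\in\mathrm{U}(2)$, which has determinant $e^{-i\theta}$ and does not lie in $\SU$. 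The circle $\{\mathrm{diag}(1,e^{-i\theta})\}\hookrightarrow\mathrm{U}(2)$ \emph{generates} $\pi_1(\mathrm{U}(2))=\Z$, so factoring through it buys you nothing; you still need $f/|f|:M\to\S^1$ itself to be null-homotopic, which is precisely what the normality/cone argument delivers. Promote your alternative to the main line and drop the $\pi_1(\s^3)$ shortcut.
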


 \begin{proof} 
Let $\omega_i$, $i=1,2$, be two never-vanishing holomorphic 2-forms on $V^*$. Their quotient is a never-vanishing holomorphic function $g: V^* \to \C$. Since the germ $(V,\0)$ is assumed to be normal, this extends to a holomorphic map $\tilde g: V \to \C$ that does not vanish  at $\0$, because a holomorphic function on $V$ cannot vanish at a single point. Hence $\tilde g$ actually takes values in $\C^*$. Since $V$ is homeomorphic to the cone over $M$ and a continuous function between topological spaces is nulhomotopic if and only if it extends to the cone, it follows that  the restriction of $g$ to the link $M$ is homotopic to a non-zero constant function. This proves the lemma.
\end{proof}

So we now have a canonical way  for  associating  to each normal Gorenstein surface singularity $(V,\0)$ a homotopy class of 
  $\SU$-frames on its link and we arrive to the following: 
    
\begin{theorem}\label{mod24}
Let $L_V$ be the link of a normal Gorenstein surface singularity $(V,\0)$. Then a choice of a nowhere vanishing holomorphic 2-form on $V^*:= V \setminus \{\0\}$ determines a canonical $\SU$-frame  $\rho$ on $L_V$, which is independent of the choice of the 2-form up to homotopy. Then $(L_V,\rho)$ represents a canonical  element in $\{{\mathcal Fr}_{\SU}(L_V)\}$, the set of homotopy classes of $\SU$-frames on $L_V$, which is classified by the integers. 
\end{theorem}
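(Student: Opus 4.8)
The plan is to chain together the facts already assembled, checking that no choice made along the way is lost. The one substantive input is the translation of the Gorenstein condition into a nowhere vanishing $2$-form: for a normal surface singularity, Gorenstein means the dualizing (canonical) sheaf is invertible, hence free of rank one on the germ; since on the smooth locus $V^*$ that sheaf is the honest canonical bundle $\K_{V^*} = \Lambda^2 T^*V^*$, a generator restricts to a holomorphic $2$-form $\omega$ on $V^*$ vanishing nowhere. So such an $\omega$ exists and one fixes one.

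Next I would recall the reduction of structure group from \cite{Se1}. Equipping $V^*$ with the Hermitian metric induced from $\C^N$ gives $TV^*$ structure group ${\rm U}(2)$; the unitary frames $(e_1,e_2)$ normalized by $\omega(e_1,e_2)=1$ then form a principal $\SU$-subbundle, the stabilizer of that normalization inside ${\rm U}(2)$ being $\SU$, and equivalently $TV^*$ becomes a quaternionic line bundle with $i,j,k$ acting on fibres. For the framing: $M:=L_V$ is a real hypersurface in $V^*$, so $TV^*|_M$ is canonically $\tau M = (1)\oplus TM$ with the trivial summand spanned by the unit outward normal $\nu$; applying the quaternion multiplications to $\nu$ yields at each point of $M$ a frame of $\tau M$ which by construction lies in $\SU$. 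This is the frame $\rho$.

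Independence of $\rho$, up to homotopy, from the choice of $\omega$ is exactly the preceding lemma (two $2$-forms differ by a nowhere vanishing holomorphic function, which extends across $\0$ by normality to a map into $\C^*$ and is hence nulhomotopic on the link), and independence from the metric follows since the space of Hermitian metrics is convex, so any two are joined by a path inducing a homotopy of frames. A biholomorphism of germs carries $\omega$, the metric class, and the normal direction to their counterparts, so the homotopy class of $\rho$ depends only on the analytic type of $(V,\0)$. Combining this with Lemma \ref{SU-frames}, which identifies $\{{\mathcal Fr}_{\SU}(L_V)\}$ with a $\Z$-torsor and hence (non-canonically) with $\Z$, exhibits $[L_V,\rho]$ as a well-defined, canonical element of that set.

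The step I expect to need the most care is making the $\SU$-reduction precise: reconciling the normalization $\omega(e_1,e_2)=1$ and the quaternionic picture with the explicit frame obtained by multiplying $\nu$ by $i,j,k$, and confirming it really lands in $\SU$ rather than merely in ${\rm U}(2)$; for this I would lean on \cite{Se1}. A minor point worth flagging is that the bijection $\{{\mathcal Fr}_{\SU}(L_V)\}\cong\Z$ is not yet canonical — the singularity alone supplies no base point — so at this stage one should claim only a canonical element of the torsor, the canonical integer coming later from the Milnor fibre in the smoothable case.
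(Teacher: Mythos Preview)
Your proposal is correct and follows essentially the same route as the paper: the theorem there is stated as an immediate consequence of the construction of $\rho$ via the $\SU$-reduction induced by $\omega$ and quaternionic multiplication of the outward normal (citing \cite{Se1}), together with Lemma~\ref{SU-frames} and the preceding lemma on independence of the $2$-form. Your additional remarks on metric-independence and the caveat that the bijection with $\Z$ is not yet canonical are accurate refinements that the paper leaves implicit or addresses only later.
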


We remark that for every closed oriented 3-manifold $M$ there is a natural surjective projection from the set 
${{\mathcal Fr}_{\SU}(M)}$ into 
the framed cobordism group $\Omega_3^{fr} \cong \Z_{24}$.
The image in  $\Omega_3^{fr}$ of the canonical element $(L_V,\rho) \in \{{\mathcal Fr}_{\SU}(L_V)\}$ can be determined by the real Adams e-invariant  $e_R(L_V, \rho)$. We know from \cite{Se4}  that this invariant can be expressed as
$$ e_R(L_V, \rho) =  12 \big({\rm Td}(X,\rho)[X] +   \hbox{Arf} \, (K_X) \big)  \quad \hbox{mod} \; (24) \,,$$
where $X$ is a compact weakly complex manifold with boundary $L_V$, ${\rm Td}(X,\rho)[X] $ is as before, the Todd genus of $X$ in the Chern classes of $X$ relative  to $\rho$, and ${\rm Arf} (K_X)  \in \{0,1\} $ is the Arf invariant of a certain quadratic form associated to the 1st Chern class $c_1(X;\rho)$ of $X$ relative to the framing. If $X$ further has an $\SU$-structure compatible with the framing, then $c_1(X;\rho) = 0$ and one gets (cf. \cite{CF}):

\begin{equation}\label{equation1.3} \; \; e_R(L_V, \rho) =  12 c_2(X;\rho) [X]  \quad \hbox{mod} \; \; (24) \end{equation}
$$ \qquad \qquad  \quad \; \;= 12 \chi(X)  \qquad  \qquad \hbox{mod} \; (24) \,.$$

 Recall now that the germ 
 $(V,\0)$ is 
smoothable if there exists a 
3-dimensional normal complex analytic space $\mathcal W$ with an isolated singular point also denoted $\0$, and a 
flat morphism:
$$\mathcal G: \mathcal W \longrightarrow \C \,,$$
such that ${\mathcal G}^{-1}(0)$ is $V$ and  ${\mathcal G}^{-1}(t)$ is non-singular for all $t \ne 0$ with $|t|$ sufficiently small. In this case one has a similar picture to that of the Milnor fibration of a hypersurface singularity, the difference being that the ambient space may now be singular.

Let  $\mathcal G: \mathcal W \longrightarrow \C \,$ be a smoothing of the germ $(V,\0)$ and assume $ \mathcal W $ is defined in $\C^N$. There exists $\e> 0$ sufficiently small such that the sphere $\s_\e$  centered at $\0 \in \mathcal  W$  meets transversally $V$ and $\mathcal  W$ and  is a Milnor sphere for both spaces. We  set $F:= {\mathcal G}^{-1}(t) \cap \B_\e$ for some $t \ne 0$ with $|t|$ sufficiently small, where $\B_\e$ is the ball in $\C^N$ bounded by $\s_\e$.  We call $F$ the Milnor fibre of the smoothing.
 We know from \cite{Gr-Sten} that if the germ is Gorenstein and smoothable, then the Euler characteristic $\chi(F)$ 
 is independent of the choice of smoothing at it equals $\mu_{GS} +1$ where $\mu_{GS}$ is the Milnor number of the singularity. By definition $\mu_{GS}$ is the 2nd Betti number of  the corresponding Milnor fibre.

The 
 following lemma from \cite{Se2, Se3} is essential for what follows.

\begin{lemma} \label{Durfee conjecture} Let $\mathcal G: \mathcal W \longrightarrow \C \,$ be a smoothing of the Gorenstein germ $(V,\0)$.
If $\omega$ is a nowhere vanishing holomorphic 2-form on $V \setminus \{\0\}$, then $\omega$ and the flat map $\mathcal G$ determine a nowhere vanishing holomorphic 3-form on  $\mathcal W\setminus \{\0\}$, and this determines (by contraction) a nowhere vanishing holomorphic 2-form on each Milnor fibre $F$ of the smoothing.
\end{lemma}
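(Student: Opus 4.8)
The plan is to establish Lemma~\ref{Durfee conjecture} by constructing the 3-form on $\mathcal W \setminus \{\0\}$ explicitly from the given data and then showing it restricts to a nowhere-vanishing 2-form on each smooth fibre.

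\textbf{Step 1: Produce the 3-form.} The first step is to observe that $\mathcal W$ is a normal 3-dimensional Gorenstein space (it is Gorenstein because it is flat over the smooth base $\C$ with Gorenstein special fibre $V$, and Gorensteinness is preserved under such deformations — this is the key input I would cite). Hence the canonical sheaf $\mathcal K_{\mathcal W \setminus \{\0\}}$ is a trivial holomorphic line bundle near $\0$, so a nowhere-vanishing holomorphic 3-form $\Omega$ exists on $\mathcal W \setminus \{\0\}$; by the same normality argument as in Lemma~1.6, any two such differ by a nowhere-vanishing holomorphic function, hence the homotopy class of its restriction to the link is well defined. To pin down $\Omega$ canonically from $\omega$ and $\mathcal G$, I would use the adjunction/Poincar\'e-residue description: on $V^* = \mathcal G^{-1}(0) \setminus \{\0\}$ one has the relation $\Omega = dg \wedge \widetilde\omega$ (locally), where $g$ is the coordinate pulled back from $\C$ by $\mathcal G$ and $\widetilde\omega$ is any holomorphic 2-form on $\mathcal W^*$ whose restriction to $V^*$ is $\omega$; equivalently $\omega$ is the residue of $\Omega/g$ along $V^*$. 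One checks this prescription is independent of the chosen extension $\widetilde\omega$ because two choices differ by $g \cdot (\text{2-form})$, which is killed by wedging with $dg$ along $\{g=0\}$, and it produces a nowhere-vanishing 3-form because $dg \ne 0$ away from the singular point (the fibres are smooth and $\mathcal G$ is a submersion there) and $\omega$ is nowhere zero.

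\textbf{Step 2: Contract to get the fibrewise 2-form.} The second step is interior multiplication (contraction). Pick any holomorphic vector field $\xi$ on $\mathcal W \setminus \{\0\}$ with $dg(\xi) = 1$ in a neighbourhood of the relevant part of the Milnor fibre $F \subset \mathcal G^{-1}(t)$; such a $\xi$ exists locally because $dg$ is nowhere zero there, and one globalizes over a neighbourhood of $F$ using a partition of unity in the smooth category or simply notes that $F$ is contained in a single chart of $\mathcal W^*$ for the purposes of defining a form. Set $\omega_F := (\iota_\xi \Omega)|_F$. Restricted to $\mathcal G^{-1}(t)$ this is a holomorphic 2-form, and it is nowhere vanishing: at a point $p \in F$, complete $\xi(p)$ to a basis $\xi(p), v_1, v_2$ of $T_p \mathcal W$ with $v_1, v_2 \in T_p \mathcal G^{-1}(t) = \ker dg_p$; then $\Omega_p(\xi(p), v_1, v_2) = (\iota_\xi\Omega)_p(v_1,v_2)$, and the left-hand side is nonzero since $\Omega$ is a nonzero 3-form and the three vectors are linearly independent. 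Finally, $\omega_F$ is independent of the choice of $\xi$ up to the $\ker dg$-directions, which drop out upon restriction to $F$: if $\xi'$ is another lift, $\xi - \xi' \in \ker dg$, so $\iota_{\xi - \xi'}\Omega$ restricted to $F$ annihilates $T F$ only after... more precisely, $\iota_{\xi-\xi'}\Omega$ differs from $0$ by a form whose pullback to $F$ vanishes because $(\xi - \xi')$ is tangent to the fibre and $\Omega$ is of top degree, so contracting with a tangent vector and then restricting to the (2-dimensional) fibre gives zero. Hence $\omega_F$ depends only on $\Omega$ (equivalently on $\omega$ and $\mathcal G$).

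\textbf{Main obstacle.} I expect the genuine content to be concentrated in Step~1, specifically the assertion that $\mathcal W \setminus \{\0\}$ carries a nowhere-vanishing holomorphic 3-form, i.e.\ that the total space of a smoothing of a Gorenstein surface singularity is again Gorenstein with trivial canonical bundle off the singular point, and that this trivialization can be chosen compatibly with $\omega$ on the special fibre. The rest (contraction, nowhere-vanishing, independence of auxiliary choices) is linear algebra done pointwise and globalized by the usual normality argument already used twice in the paper. One subtlety to be careful about is whether $\Omega$ is required to be honestly holomorphic and nowhere-zero on all of $\mathcal W \setminus \{\0\}$ or only in a Milnor ball around $\0$; since only a neighbourhood of $F$ is needed for the conclusion, working in a Milnor representative of $\mathcal W$ suffices, and there the existence of $\Omega$ follows from the Gorenstein property together with the fact that $\mathcal W \setminus \{\0\}$ is smooth (where the dualizing sheaf is the ordinary canonical bundle) and that $H^1$ of the structure sheaf of a sufficiently small punctured Stein neighbourhood does not obstruct trivializing a line bundle that is already trivial on the special fibre — one uses that $\mathcal G$ is a deformation retract onto $V$ and the canonical bundle is topologically trivial there by the Gorenstein hypothesis on $(V,\0)$.
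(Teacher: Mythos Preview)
The paper does not prove this lemma; it is quoted from the author's earlier work \cite{Se2, Se3} and used as a black box. Your outline is essentially the standard argument one finds there: (i) the total space $(\mathcal W,\0)$ is Gorenstein because it is flat over a regular base with Gorenstein fibres, so its dualizing sheaf is free on a small representative of the germ and a nowhere-vanishing holomorphic 3-form $\Omega$ exists on $\mathcal W\setminus\{\0\}$; (ii) the Gelfand--Leray form $\Omega/d\mathcal G$ restricts to a nowhere-vanishing holomorphic 2-form on each smooth fibre, by exactly the pointwise linear-algebra check you give in Step~2 (your independence-of-$\xi$ argument is correct: if $\eta=\xi-\xi'$ is tangent to $F$ then $\Omega(\eta,v_1,v_2)=0$ for $v_1,v_2\in T_pF$ since three vectors in a 2-plane are dependent).

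One point worth tightening. Your formula $\Omega = dg \wedge \tilde\omega$ does not by itself produce a nowhere-vanishing 3-form on all of $\mathcal W\setminus\{\0\}$: it only prescribes $\Omega$ along $V^*$, and away from $V$ the extension $\tilde\omega$ is uncontrolled and $dg\wedge\tilde\omega$ may well vanish. The clean route is the one you already gesture at just before: Gorensteinness of the \emph{germ} $(\mathcal W,\0)$ means the dualizing sheaf is \emph{free} (not merely invertible) on a small representative, so some nowhere-vanishing $\Omega_0$ exists outright; its Poincar\'e residue along $V^*$ is a nowhere-vanishing $\omega_0$; then $\omega = h\,\omega_0$ for a unit $h$ on $V$ by the normality argument already used in the paper, $h$ extends to a unit $\tilde h$ on a neighbourhood of $\0$ in $\mathcal W$, and $\Omega := \tilde h\,\Omega_0$ is the desired 3-form determined by $\omega$ and $\mathcal G$. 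This also makes the deformation-retract/$H^1$ reasoning in your final paragraph unnecessary.
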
 

This lemma implies the following theorem which essentially comes  from \cite{Se2}. A slightly weaker  version of the second statement was conjectured by Durfee \cite[Conjecture (1.6)]{Du}.

\begin{theorem}\label{Durfee thm} 
Let $F$ be a Milnor fibre  of a  smoothable normal Gorenstein complex surface singularity. Then:
\begin{enumerate}
\item The tangent bundle $TF$ has an $\SU$-structure extending the canonical one on the boundary.
\item 
The bundle $TF$ is 
topologically trivial as a complex bundle. 
\end{enumerate}
\end{theorem}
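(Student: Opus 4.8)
The plan is to deduce both parts from the nowhere-vanishing holomorphic $2$-form on $F$ provided by Lemma \ref{Durfee conjecture}, together with a low-dimensional obstruction-theory argument. Fix a smoothing $\mathcal G\colon\mathcal W\to\C$ of $(V,\0)$ and a never-vanishing holomorphic $2$-form $\omega$ on $V^*$, and let $\omega_F$ be the never-vanishing holomorphic $2$-form that $\omega$ and $\mathcal G$ induce on the Milnor fibre $F$, namely the holomorphic $3$-form on $\mathcal W\setminus\{\0\}$ of that lemma followed by contraction.

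First I would prove (i). The fibre $\mathcal G^{-1}(t)$ is a non-singular complex surface and $\omega_F$ is a nowhere-vanishing holomorphic section of its canonical bundle, so the construction of \cite{Se1} applies verbatim and reduces the structure group of $T\mathcal G^{-1}(t)$, hence of $TF$, from ${\rm U}(2)$ to $\SU$. There then remains the assertion that the restriction of this $\SU$-structure to $\partial F$ is homotopic to the canonical framing $\rho$ of $L_V$. Since $\s_\e$ is a common Milnor sphere for $V$ and $\mathcal W$, the map $\mathcal G$ restricted to $\mathcal W\cap\s_\e$ is a locally trivial fibration over a small disc, and trivializing it gives a diffeomorphism $\partial F=\mathcal G^{-1}(t)\cap\s_\e\;\cong\;L_V=V\cap\s_\e$ that is $C^\infty$-close to the identity. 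Under this identification the pair (holomorphic $2$-form, outward normal of the boundary in the fibre) used to build the $\SU$-frame on $TF|_{\partial F}$ is $C^\infty$-close to the pair $(\omega,\nu)$ used to build $\rho$ on $V^*$, so the two frames are homotopic. The hard part will be making this boundary comparison precise; the rest of the argument is formal.

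Then (ii) follows from a dimension count. The manifold $F$ is compact with non-empty boundary, hence homotopy equivalent to a CW complex of dimension $\le 3$; in particular $H^4(F;\Z)=0$. Over a space of dimension $\le 3$ a rank-$2$ complex vector bundle is classified by its first Chern class, because $B{\rm U}(2)$ is simply connected with $\pi_2=\Z$ and $\pi_3=0$; equivalently, granting (i), $TF$ carries an $\SU$-structure and $B\SU$ is $3$-connected, so every $\SU$-bundle over a $3$-complex is trivial. Since $\omega_F$ trivializes $\K_F=\det T^*F$ we have $c_1(TF)=0$, and therefore $TF$ is topologically trivial as a complex bundle.
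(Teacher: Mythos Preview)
Your proof is correct. For part (i) you follow the paper's line exactly---invoke Lemma~\ref{Durfee conjecture} to get a never-vanishing holomorphic $2$-form on $F$ and hence an $\SU$-reduction---though you spell out the boundary compatibility that the paper leaves implicit (it simply says ``immediate from \ref{Durfee conjecture} and the previous discussion''). Your caution there is not misplaced, but the paper's attitude is that since the $3$-form on $\mathcal W\setminus\{\0\}$ is built from $\omega$ and $\mathcal G$, its contraction on $F$ restricts on $\partial F\cong L_V$ to the same data that defines $\rho$, so compatibility is essentially tautological.

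For part (ii) you take a mildly different route. The paper argues geometrically: since $H^4(F;\Z)=0$ there is a nowhere-vanishing vector field on $F$, and multiplying it by the quaternions $i,j,k$ (using the $\SU$-structure from (i)) produces an explicit complex trivialization of $TF$. You instead use pure obstruction theory: $F$ has the homotopy type of a $3$-complex and $B\SU$ is $3$-connected, so the $\SU$-bundle $TF$ is trivial. Both arguments are short and rest on the same two inputs (the $\SU$-structure and the vanishing of top cohomology); the paper's version has the advantage of actually exhibiting a frame, which is later exploited in the proof of Proposition~\ref{prop-e-inv}, while yours is cleaner as a bare existence statement.
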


The first statement in  \ref{Durfee thm} is immediate from \ref{Durfee conjecture} and the previous discussion. The second statement follows from the first one and the fact that there exist on $F$ nowhere vanishing vector fields because $H^4(F; \mathbb Z) =0$ since $F$ is a connected manifold with non-empty boundary. Multiplying one such vector field by the quaternions $i, j, k$ one gets a trivialization of $TF$ compatible with its complex structure.

From equation  \ref{equation1.3}  and Theorem \ref {Durfee thm} 
we deduce: 
$$ e_R(L_V, \rho) \; =  \,  12 \chi(F)  \quad \hbox{mod} \; (24) \; $$
$$\qquad \qquad \quad  = \; \mu_{GS} +1 \quad \hbox{mod} \; (24) \,,
$$
where $\mu_{GS} $ is the Milnor number of $(V,\0)$ introduced in \cite{Gr-Sten},  independent of the choice of the smoothing.

To complete the proof of Theorem 1, and get the commutative diagram stated in the introduction, we still need to show that there is a natural bijection between the set $\{{\mathcal Fr}_{\SU}(M)\}$ and the additive group $\Z$, taking the element $(L_V,\rho)$ into the stated integer. It is here that we use the condition of smoothability.

We  now want to define a natural bijection $\{{\mathcal Fr}_{\SU}(M)\} \, \leftrightarrow \, \Z$ giving rise to
 the $\widehat E$-invariant associated to the germ $(V,\0)$.    We do it as follows. First, let $\beta$ be a trivialization of the bundle $TF$ defined by a  global $\SU$-frame.
This gives a ``base point" in $\{{\mathcal Fr}_{\SU}(M)\}$,  the set of homotopy classes of $\SU$-frames on $M$. 
Proposition \ref{prop-e-inv} below  shows that  this base point is canonical.  

Once we have the base point determined as above, we  define a bijection
$\{{\mathcal Fr}_{\SU}(M)\} \buildrel {\delta} \over {\longrightarrow} \Z
$
using the $\mathbb Z$-action: Set $\delta(M,\beta) = 0 $,  and  given an arbitrary element $(M,\mathcal F) \in 
\{{\mathcal Fr}_{\SU}(M)\}$ notice that  $\mathcal F$ differs from $\beta$ by a map $d(\mathcal F, \beta): M \to \SU$. By Hopf's degree theorem (see for instance \cite [Chapter 3]{Gu-Po}), the map $d(\mathcal F, \beta)$   has a certain degree $d \in \Z$ that characterizes it up to homotopy. Set $\delta(M,\mathcal F) = d$. 
Then
define:

\begin{definition}
The $\widehat E$-invariant of the germ $(V,\0)$  is   the difference (measured as a degree) between the canonical frame $\rho$ on the link $M$, and the restriction to $M$ of a trivialization of the tangent bundle $TF$:
$$\widehat E (V,\0) := \delta (M,\rho) \in \Z \,.$$
\end{definition}

The proposition below shows that this invariant is independent of all choices:

\begin{proposition}\label{prop-e-inv} 
One has $\widehat E (V,\0) = 12{\rm Td}(F,\rho) [F] = \mu_{GS} + 1 $, where $\mu_{GS} $ is the Milnor number.
\end{proposition}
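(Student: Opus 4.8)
The plan is to show that the integer $\delta(M,\rho)$ defined via Hopf degree coincides with $12{\rm Td}(F,\rho)[F]$, and then invoke Theorem \ref{Durfee thm} to identify this with $\mu_{GS}+1$. First I would observe that $F$ is a compact weakly (in fact almost) complex $4$-manifold with boundary $M=L_V$, carrying an $\SU$-structure compatible with the canonical frame $\rho$ on the boundary by Theorem \ref{Durfee thm}(i). Hence the relative Chern class $c_1(F;\rho)$ vanishes, and the relative Todd genus reduces to ${\rm Td}(F,\rho)[F] = \tfrac{1}{12}\big(c_2(F;\rho) + c_1(F;\rho)^2\big)[F] = \tfrac{1}{12}c_2(F;\rho)[F]$. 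So the claim $\widehat E(V,\0) = 12{\rm Td}(F,\rho)[F]$ is equivalent to $\delta(M,\rho) = c_2(F;\rho)[F]$.

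The key step is therefore to identify the relative Euler number $c_2(F;\rho)[F]$ with the Hopf degree of the difference map $d(\rho,\beta)\colon M \to \SU$, where $\beta$ is the global $\SU$-frame on $TF$ guaranteed by Theorem \ref{Durfee thm}(ii). This is the standard obstruction-theoretic computation: $c_2(F;\rho)$ is by definition represented by the obstruction to extending the section-pair $\rho$ (a $\C$-frame of $TF|_M$, i.e.\ a trivialization of $TF|_M$) to a nowhere-zero section over all of $F$ — but since $\beta$ already trivializes $TF$ globally, one uses $\beta$ as the reference trivialization and the relative obstruction class becomes exactly the primary difference obstruction between $\rho$ and $\beta|_M$, which lives in $H^3(M;\pi_3(\SU)) \cong H^3(M;\Z) \cong \Z$ and is precisely the degree of $d(\rho,\beta)$. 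Concretely: glue the cylinder $M\times[0,1]$ to $F$ along $M\times\{0\}$ and use $\beta$ on $F$ and the homotopy realizing $d(\rho,\beta)$ on the cylinder to produce a section of $TF$ (extended) that is nonzero except at a finite set of points whose signed count is both $c_2(F;\rho)[F]$ (by the definition of relative Chern class, since $\beta$ has trivial relative $c_2$) and the degree of $d(\rho,\beta)$ (by Hopf's theorem). This gives $\delta(M,\rho) = c_2(F;\rho)[F]$, hence $\widehat E(V,\0) = 12{\rm Td}(F,\rho)[F]$.

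Finally, since the $\SU$-structure on $TF$ forces $c_1(F;\rho)=0$, the relative Todd genus satisfies $12{\rm Td}(F,\rho)[F] = c_2(F;\rho)[F] = \chi(F)$, the last equality because for a compact $4$-manifold $F$ with boundary, framed on the boundary, the relative top Chern number equals the Euler characteristic (the relative Euler class evaluates to $\chi(F)$ by Poincaré–Hopf with boundary, the boundary frame being the one that makes the count relative). By the Greuel–Steenbrink result quoted before Lemma \ref{Durfee conjecture}, $\chi(F) = \mu_{GS}+1$ independently of the smoothing, so $\widehat E(V,\0) = \mu_{GS}+1$. It remains to check that $\delta(M,\rho)$ does not depend on the choice of global frame $\beta$ on $TF$, nor on the choice of smoothing: any two global $\SU$-frames on $TF$ differ by a map $F \to \SU$, which restricts to a nulhomotopic map on $M=\partial F$ (it extends over $F$), so the base point in $\{{\mathcal Fr}_{\SU}(M)\}$ is well defined; and independence of the smoothing follows since $\chi(F)$ is, which closes the argument. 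The main obstacle I anticipate is making the obstruction-theory identification in the second paragraph fully rigorous — carefully matching the sign conventions so that the relative Chern number, the Poincaré–Hopf count with a prescribed boundary frame, and the Hopf degree all agree — rather than any conceptual difficulty.
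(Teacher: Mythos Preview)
Your proposal is correct and reaches the same identities as the paper, but the route to $\delta(M,\rho)=\chi(F)$ differs. The paper proceeds geometrically: it removes $\mu_{GS}+1$ disjoint small balls $D_i$ from the interior of $F$ so that the remaining piece $F^*$ has $\chi(F^*)=0$, uses Poincar\'e--Hopf to extend the outward normal on $M$ together with inward-pointing normals on the $\partial D_i$ to a nonvanishing vector field on all of $F^*$, and then multiplies this vector field by the quaternions $i,j,k$ (via the ambient $\SU$-structure on $F$) to obtain an explicit extension of the difference map $d(\rho,\beta)$ to a map $F^*\to\SU$; the degree on $M$ is then the sum of the degrees on the boundary spheres $\partial D_i$, each contributing~$1$. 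Your obstruction-theoretic identification of $c_2(F;\rho)[F]$ with the Hopf degree of $d(\rho,\beta)$, followed by $c_2(F;\rho)[F]=\chi(F)$ via relative Poincar\'e--Hopf, is more conceptual and has the advantage of making the independence of the base-point frame $\beta$ transparent, whereas the paper's construction is more hands-on and shows concretely where each unit of degree sits. A minor packaging difference: the paper deduces $c_1(F;\rho)^2[F]=0$ from the cup-product identity $c_1(F;\mathcal F)^2=c_1(F;\mathcal F)\cdot c_1(F)$ together with the absolute vanishing $c_1(F)=0$ (valid for \emph{any} boundary frame, since $TF$ is trivial), rather than from the stronger $c_1(F;\rho)=0$ that you invoke from the $\SU$-extension; both suffice here.
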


\begin{proof} Since $TF$ is a trivial bundle, its first Chern class  vanishes, $c_1(F) =0$. By a general property of the cup product, given an arbitrary  $\SU$-frame  on the link, for the corresponding relative  first Chern class   $c_1(F; \mathcal F)$ one has 
$$ c_1(F; \mathcal F)^2 = c_1(F; \mathcal F) \cdot c_1(F) = 0 \,.$$
Hence $ {\rm Td}(F,\rho) [F] = \frac{1}{12} \chi(F)$.
We know from \cite {Gr-Sten} that the Euler characteristic of $F$ is $\mu_{GS}  +1$. Choose small closed balls $D_1, \cdots, D_{\mu +1}$ in the interior of $F$, pairwise disjoint. Let $F^*$ denote the compact manifold obtained by removing from $F$ the interior of those balls. Then $\chi(F^*) = 0$. 
Now choose at the boundary of each $D_i$ a vector field $\nu_i$ pointing toward the center of the ball. 
Put on $M = \partial F$ the unit outwards normal vector field. Since $\chi(F^*) = 0$, these vector fields on $\partial F^*$ extend to a vector field $\zeta$ on $F^*$ with no singularity in its interior, by the theorem of Poincar\'e-Hopf for manifolds with boundary (see for instance \cite{Mi2}.
The $\SU$-structure determines multiplication by the quaternions $i, j, k$ defined at each point in $F^*$. Doing so we get on $M$ its canonical framing, and we get on the boundary of each $D_i$ the canonical frame on the 3-sphere. This determines an extension of the map $d(\mathcal F, \beta): M \to \SU$ to a map $\tilde d(\mathcal F, \beta): F^* \to \SU$. Hence the degree of $d(\mathcal F, \beta)$ is the sum of the degrees on the spheres. Since each of these contributes with 1,  the result follows from Lemma \ref{Durfee conjecture}.
\end{proof}

\begin{remas} $\,$

\begin{enumerate} {\rm
\item The $\widehat E$ invariant defined above can be computed using the Riemann-Roch defect introduced by 
E. Loojienga  in \cite [Section 3]{Lo}. 

\item Given an $\SU$-frame  on a closed oriented 3-manifold $M$ which bounds a compact weakly complex 4-manifold $X$, the obstruction to extending the underlying $\SU$-structure to the interior of $X$ yields to an obstruction in dimension 2, that can be represented by an oriented 2-submanifold $C$, which is the Lefschetz dual of the 1st Chern class of $X$ relative to the boundary. The manifold $C$ 
is a characteristic submanifold of $X$ in the sense of 
\cite {Fr-Kir, Ki}. By \cite [Chapter XI]{Ki}, the characteristic cobordism group  in dimension 4 is isomorphic to $\Z \oplus \Z$, the isomorphism being achieved by taking a characteristic pair $(X,C)$ to the pair of integers $\big(\sigma(X), \frac{1}{8} (\sigma(X) - C^2) \big)$. In the setting we envisage here, the second invariant essentially is the Todd genus. This suggests that the results in this paper can be regarded from the viewpoint of characteristic cobordism.

\item Whenever one has a characteristic pair $(X,C)$ as above, $X$ becomes a ${\rm Spin}^c$ manifold and $C$ is Spin. These structures play a significant role in low dimensional manifolds and in the theory of surface singularities, see for instance \cite {NeSzi, NeSigu}. I believe that the $\widehat E$-invariant, and Looijenga's Riemann-Roch defect, must have a deep relation with the Seiberg-Witten invariant of the link with its canonical  ${\rm Spin}^c$-structure.
}
\end{enumerate} 
\end{remas}

{\small

}

\vglue.2in
\begin{tabular}{ll}
Jos\'e Seade\\
 Instituto de Matem\'aticas,    \\
 Universidad Nacional Aut\'onoma de M\'exico.\\
  jseade@im.unam.mx
\end{tabular}

\end{document}